\theoremstyle{plain}
\newtheorem{thm}{Theorem}[section]
\crefname{thm}{Theorem}{Theorems}
\theoremstyle{plain}
\newtheorem{lem}[thm]{Lemma}
\crefname{lem}{Lemma}{Lemmas}
\theoremstyle{plain}
\theoremstyle{plain}
\newtheorem*{claim*}{Claim}
\crefname{claim}{Claim}{Claims}
\theoremstyle{definition}
\newtheorem{defn}[thm]{Definition}
\theoremstyle{plain}
\newtheorem{conjecture}[thm]{Conjecture}
\theoremstyle{plain}
\theoremstyle{definition}
\theoremstyle{definition}
\theoremstyle{plain}
\newtheorem{claim}[thm]{Claim}
\crefname{appsec}{Appendix}{Appendices}
\date{}
\let\originalleft\left
\let\originalright\right
\renewcommand{\left}{\mathopen{}\mathclose\bgroup\originalleft}
\renewcommand{\right}{\aftergroup\egroup\originalright}
\renewcommand*{\UrlTildeSpecial}{%
  \do\~{%
    \mbox{%
      \fontfamily{ptm}\selectfont
      \textasciitilde
    }%
  }%
}%
\let\Url@force@Tilde\UrlTildeSpecial
\begin{document}

\title{On Kahn's basis conjecture}

\author{Matija Buci\'c\thanks{Department of Mathematics, ETH, Z\"urich, Switzerland. Email: \href{mailto:matija.bucic@math.ethz.ch} {\nolinkurl{matija.bucic@math.ethz.ch}}.}\and
Matthew Kwan\thanks{Department of Mathematics, Stanford University, Stanford, CA 94305. Email: \href{mailto:mattkwan@stanford.edu} {\nolinkurl{mattkwan@stanford.edu}}. This research was done in part while the author was working at ETH Zurich, and is supported in part by SNSF project 178493.}\and
Alexey Pokrovskiy\thanks{Department of Economics, Mathematics and Statistics, Birkbeck, University of London. Email:
\href{mailto:Dr.Alexey.Pokrovskiy@gmail.com}{\nolinkurl{Dr.Alexey.Pokrovskiy@gmail.com}}.}\and Benny Sudakov\thanks{Department of Mathematics, ETH, Z\"urich, Switzerland. Email:
\href{mailto:benjamin.sudakov@math.ethz.ch} {\nolinkurl{benjamin.sudakov@math.ethz.ch}}.
Research supported in part by SNSF grant 200021-175573.}}

\maketitle
\global\long\def\E{\mathbb{E}}
\global\long\def\Var{\operatorname{Var}}
\global\long\def\S{\mathcal{S}}

\begin{abstract}
In 1991, Kahn made the following conjecture. For any $n$-dimensional vector space $V$ and any $n\times n$ array of $n^2$ bases of $V$, it is possible to choose a representative vector from each of these bases in such a way that the representatives from each row form a basis and the representatives from each column also form a basis. Rota's basis conjecture can be viewed as a special case of Kahn's conjecture, where for each column, all the bases in that column are the same. Recently the authors showed that in the setting of Rota's basis conjecture it is possible to find suitable representatives in $\left(1/2-o\left(1\right)\right)n$ of the rows. In this companion note we give a slight modification of our arguments which generalises this result to the setting of Kahn's conjecture. Our results also apply to the more
general setting of matroids.
\end{abstract}

\section{Introduction}

This note should be considered as a companion note to the paper \cite{us}. Here we fill in the details of how to modify our proof given in \cite{us} to also apply to the setting of Kahn's conjecture \cite{HR94}. The proof presented here is mostly self-contained but we refer the interested reader to \cite {us} for motivation and a more detailed take on the proof.
 
\emph{Matroids} are objects that
abstract the combinatorial properties of linear independence in vector
spaces. Specifically, a finite matroid $M=\left(E,\mathcal{I}\right)$
consists of a finite ground set $E$ (whose elements may be thought
of as vectors in a vector space), and a collection $\mathcal{I}$
of subsets of $E$, called independent sets. The defining properties
of a matroid are that:
\begin{itemize}

\item{the empty set is independent (that is, $\emptyset\in\mathcal{I}$);}

\item{subsets of independent sets are independent (that is, if $A'\subseteq A\subseteq E$
and $A\in\mathcal{I}$, then $A'\in\mathcal{I}'$);}

\item{if $A$ and $B$ are independent sets, and $\left|A\right|>\left|B\right|$,
then an independent set can be constructed by adding an element of
$A$ to $B$ (that is, there is $a\in A\backslash B$ such that $B\cup\left\{ a\right\} \in\mathcal{I}$).
This final property is called the \emph{augmentation property}.}

\end{itemize}

Observe that any finite set of elements in a vector space (over any
field) naturally gives rise to a matroid, though not all matroids
arise this way. A \emph{basis }in a matroid $M$ is a maximal independent
set. By the augmentation property, all bases have the same size, and
this common size is called the \emph{rank }of $M$. 
The following conjecture is the natural matroid generalisation of Kahn's original conjecture.

\begin{conjecture}
\label{conj:kahn}Given a rank-$n$ matroid and bases $B_{i,j}$ for
each $1\le i,j\le n$, there exist representatives $b_{i,j}\in B_{i,j}$
such that each $\left\{ b_{1,j},\dots,b_{n,j}\right\} $ and each
$\left\{ b_{i,1},\dots,b_{i,n}\right\} $ are bases.
\end{conjecture}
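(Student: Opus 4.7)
The plan is to attempt \cref{conj:kahn} by combining a matroid-theoretic augmenting path approach with a connectivity argument in the spirit of Aharoni--Berger. I would start from any partial assignment satisfying the row constraints: pick $b_{i,j}\in B_{i,j}$ so that $\{b_{i,1},\dots,b_{i,n}\}$ is a basis of $M$ for every row $i$. The existence of such a rainbow basis per row follows from the augmentation property applied greedily (or from classical rainbow-basis results), since within a single row all $B_{i,j}$ are themselves bases of the same matroid. With all rows thus fixed, the task reduces to correcting the columns while maintaining row-independence as an invariant.

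The repair step would proceed by augmenting walks. For each column $j$ whose chosen representatives fail to be a basis, locate a circuit $C\subseteq\{b_{1,j},\dots,b_{n,j}\}$, pick $b_{i,j}\in C$, and attempt to replace it by a different element $b'\in B_{i,j}$. If the swap preserves row $i$ (via strong basis exchange in the row matroid), it strictly improves the column; otherwise, the swap creates a circuit inside row $i$, and we must further swap one element of that row-circuit with a different representative from its own $B_{i,k}$, propagating the correction through a bipartite alternating walk whose vertices alternate between ``row-fixes'' and ``column-fixes.'' A potential function based on the sum of the coranks of all rows and columns would measure progress. One would then need to show that whenever some column is deficient, an augmenting walk always exists and strictly decreases the potential, so the process terminates at a valid double basis.

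The main obstacle is that \cref{conj:kahn} is a strict strengthening of Rota's basis conjecture, which has resisted attack for decades; Rota's conjecture corresponds to the special case in which $B_{i,j}$ depends only on the column $j$, so any complete proof of Kahn's conjecture would in particular resolve Rota's. The substantive difficulty beyond the approximate $(1/2-o(1))n$-row result of \cite{us} is the need for a \emph{global} termination guarantee: the alternating repair process could in principle cycle without reducing the potential, and ruling this out requires an additional structural input. The most promising route appears to be a topological one applied to the bipartite ground set $\{(i,j,v):v\in B_{i,j}\}$, endowed simultaneously with a row-matroid and a column-matroid, seeking a common basis. Classical matroid intersection duality does not apply in this two-level setting, so the crux step, and the one I expect to be the genuine obstacle, is to establish a connectivity or min-max theorem for this joint complex that forces the augmenting walk to exist at every step.
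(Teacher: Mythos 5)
The statement you are trying to prove is \cref{conj:kahn} itself, which is an open conjecture: the paper does not prove it, and only establishes the partial result \cref{thm:1/2-kahn}, producing representatives forming row bases in roughly $(1/2-\varepsilon)n$ of the rows (with the chosen entries in each column merely independent). Your proposal is therefore not a proof but a programme, and you identify its fatal gap yourself: everything hinges on the claim that whenever some column is deficient there exists an augmenting alternating walk that strictly decreases your corank potential, and you offer no argument for this. The easy part of your plan (choosing each row to be a transversal basis of its $n$ given bases) is indeed fine, and local exchange steps of the kind you describe do exist --- they are essentially the ``simple swaps'' and $(i,b)$-addable/removable elements of the paper --- but a single row-repair can break other columns, a column-repair can break its row, and nothing prevents the alternating walk from cycling or the potential from stalling at a local optimum. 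The paper's way around this is quantitative rather than global: it shows (via \cref{claim:cascade-increase} and \cref{claim:recurrence-estimate}) that cascades of swaps either let one add a new entry or the number of removable positions grows exponentially, which forces progress only as long as sufficiently many rows are incomplete; this is exactly why the method saturates at about half the rows and does not yield the full conjecture.

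Concretely, the missing ingredient is the ``connectivity or min-max theorem'' you defer to the end: no such duality is known for the two-level row/column structure, and matroid intersection does not apply since the row constraints and column constraints do not form two matroids on a common ground set in the required way (the choice of a representative from $B_{i,j}$ couples them). Note also that, as you say, \cref{conj:kahn} contains Rota's basis conjecture as the special case where $B_{i,j}$ depends only on $j$, so any termination guarantee of the kind you want would in particular resolve Rota's conjecture; this is a strong indication that the potential-function scheme cannot be completed by elementary exchange arguments alone. As it stands, your proposal reproduces (in different language) the local machinery the paper already uses, but without the counting argument that makes that machinery yield \cref{thm:1/2-kahn}, and without any new idea that could close the gap between half the rows and all of them.
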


\begin{thm}
\label{thm:1/2-kahn}
For any $\varepsilon>0$ the following holds for sufficiently large $n$. Given a rank-$n$ matroid and bases $B_{i,j}$ for
each $1\le i\le n$ and $1\le j \le f=(1-\varepsilon)n/2$, there exist representatives $b_{i,j}\in B_{i,j}$ and $L\subseteq \{1,\dots,f\}$
such that each $\{b_{i,j}:i\in L\}$ is independent, and such that
$\{b_{i,1},\dots,b_{i,n}\}$ is a basis for any $i \in L$ and $|L| \ge (1/2-\varepsilon)n$.
\end{thm}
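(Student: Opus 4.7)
The plan is to modify the proof of the analogous statement for Rota's basis conjecture from \cite{us}. The overall strategy in that paper is iterative: one processes the columns one at a time, maintaining a partial assignment together with a set of ``alive'' rows whose chosen entries so far form an independent set, and one bounds the per-column probability of killing a given alive row. In the Rota case the bases in a single column are all equal, $B_{i,j}=C_j$, so choosing representatives for column $j$ amounts to picking a permutation of $C_j$, and the per-row kill probability is at most $(j-1)/n$ (the chosen new element must avoid the rank-$(j-1)$ span of row $i$'s current partial entries, and at most $j-1$ elements of any basis can lie in such a flat). Summing over $j=1,\dots,f=(1-\varepsilon)n/2$, the expected total number of dead rows is at most $\binom{f}{2}/n \le (1-\varepsilon)^2 n/8 + o(n)$, which is well below the $(1/2+\varepsilon)n$ rows we are allowed to lose.

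My plan is to replace the per-column step in \cite{us} with one suited to the genuine Kahn setting, where $B_{i,j}$ may depend on $i$. For each new column $j$, I would select a uniformly random \emph{basis transversal} of the family $(B_{1,j},\dots,B_{n,j})$, i.e.\ a system of distinct representatives $(b_{1,j},\dots,b_{n,j})$ with $b_{i,j}\in B_{i,j}$ whose union is a basis of the ground matroid. Such a transversal exists by Rado's theorem: since each $B_{i,j}$ has rank $n$, the union of any subfamily of size $s$ has rank at least $s$. Combined with the fact that at most $j-1$ elements of $B_{i,j}$ can lie in the rank-$(j-1)$ span of row $i$'s current partial entries, a marginal estimate for this random transversal should yield a per-column kill probability of $O((j-1)/n)$ per alive row, matching the Rota bound up to a multiplicative constant that can be absorbed into $\varepsilon$.

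The main obstacle I anticipate is establishing this marginal estimate, since the basis-transversal condition couples all entries in column $j$ and one must verify that no particular element of $B_{i,j}$ is systematically over- or under-represented in the $i$th slot. I would handle this via a symmetric exchange / double-counting argument of Brualdi type, putting basis transversals that place a given $e\in B_{i,j}$ in row $i$ into bounded-to-one correspondence with transversals that place a different $e'\in B_{i,j}$ there through a single augmenting exchange. Once this local estimate is in place, the outer concentration argument of \cite{us} --- for instance an Azuma--McDiarmid bounded-differences inequality applied to the sequence of random column choices --- transfers essentially unchanged, and after a standard derandomisation one obtains the desired set $L$ with $|L|\ge (1/2-\varepsilon)n$.
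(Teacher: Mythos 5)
Your proposal does not follow the paper's argument, and as written it does not close. Two concrete problems, one about the parameter bookkeeping and one about the key lemma you defer.

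\textbf{The budget and scale are wrong.} The paper's table $T$ has $f=(1-\varepsilon)n/2$ rows and $n$ columns (the theorem statement has the indices swapped, but the proof in Section~2 makes the intended orientation unambiguous). A row $i\in L$ must eventually hold all $n$ of its entries to be a basis, and we need $|L|\ge (1/2-\varepsilon)n$ out of only $f$ rows; so the number of rows we may ``kill'' is $f-(1/2-\varepsilon)n=\varepsilon n/2$, not ``$(1/2+\varepsilon)n$''. More fundamentally, if you fill the table one column at a time by a random basis/independent transversal in each column, the kill probability for an alive row at step $j$ is $\Theta((j-1)/n)$ even in the clean Rota case, and you must run this for all $n$ columns (not $f$) to complete any row to a basis. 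The summed kill probability per row is then $\Theta(\sum_{j\le n}(j-1)/n)=\Theta(n)$, so this greedy-random scheme loses every row with high probability; it produces no full rows at all. This is exactly why the paper does not process columns sequentially: its proof (in both this note and \cite{us}) is a purely combinatorial augmentation argument. It maintains a partial filling of $T$ and repeatedly grows the number of filled cells using \emph{simple swaps} (\cref{def:addable}, \cref{claim:many-good}, \cref{claim:add-if-good}) and \emph{cascading swaps} (\cref{claim:cascade-increase}), showing that as long as fewer than $(1/2-\varepsilon)n$ rows are complete bases, the number of cascade-removable positions grows geometrically and a place to add an entry must exist. There is no randomness, no martingale, and no derandomisation step; your description of \cite{us} as a probabilistic row-survival argument does not match the cited work.

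\textbf{The marginal estimate is a genuine gap even inside your own framework.} You want: for a uniformly random transversal $(b_{1,j},\dots,b_{f,j})$ of the distinct bases $B_{1,j},\dots,B_{f,j}$ forming an independent set, the probability that $b_{i,j}$ lands in a rank-$r$ flat is $O(r/n)$. In the Rota case this is immediate because the marginal of $b_{i,j}$ is exactly uniform on $C_j$. In the Kahn case the marginal of $b_{i,j}$ on $B_{i,j}$ is generally not uniform, and a Brualdi-style exchange map gives at best a ``comparable-up-to-a-factor'' statement without telling you what the factor is. You would need the factor to be an absolute constant (in fact a fairly small one, given the tight budget above), and there is no argument here that the exchange graph has bounded degree independent of $n$ and of the matroid. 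This is precisely the kind of difficulty that the definitions of $(i,b)$-addability, $(i,b)$-swappability, $(i,b)$-removability, and the matching \cref{lem:matching} in the paper are engineered to avoid: the paper never needs to control a distribution over transversals, only to exhibit one augmenting move, which is done with the matroid augmentation axiom and a Hall-type argument.

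In short: the proposal misidentifies the method of \cite{us}, uses a budget that is off by a factor of $\Theta(1/\varepsilon)$, would (even with a perfect marginal bound) kill every row because it has to run for $n$ columns rather than $f$, and leaves the central marginal estimate unproved. You should instead work with the swap/cascade machinery of this note: \cref{claim:many-good}, \cref{claim:add-if-good}, \cref{claim:removability}, \cref{claim:1-addability}, and the cascade recursion of \cref{claim:cascade-increase} together with the bootstrapping step \cref{claim:many-missing}.
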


Note that if we are in the setting of \cref{conj:kahn} where bases are given for all $1 \le i,j\le n$ then the above theorem allows us to choose roughly which rows we would like to find our bases in.


\noindent{\bf Notation.} We will frequently want to denote the result of adding and removing single elements from a set. For a set $S$ and some $x\notin S$, $y\in S$, we write $S+x$ to mean $S\cup \{x\}$, and we write $S-y$ to mean $S\setminus \{y\}$.

\section{Finding many row bases}

In this section we prove \cref{thm:1/2-kahn}.

Let $t=f-\varepsilon n/2=(1/2-\varepsilon)n.$ Let $T$ be an $f \times n$ table partially filled with matroid elements coming from the associated bases. We denote by $T(i,j)\in B_{i,j}$ the element in cell $(i,j)$, with $T(i,j)=\emptyset$ denoting that it is empty. We start with $T$ empty and in each iteration increase the number of values entered in $T$ while preserving the independence of the elements in each row and column. We denote by $S_i$ the set of all current entries of $T$ in row $i$, and by $C_j$ the set of all current entries in column $j$.

\subsection{\label{subsec:simple-swaps}Simple swaps}

Let $U$ denote the set of currently non-empty positions in $T$. Our objective is to increase the size of $U$ for as long as we don't have $t$ full rows. If $T(i,j)=\emptyset$ and there is an $x\in B_{i,j}$ independent to $S_i$ and to $C_j$ then we can let $T(i,j)=x.$ 

We will want much more freedom than this: we also want to consider those elements
that can be added to $T$ after making a small change to the current entries of $T$. This
motivates the following definition.
\begin{defn}
\label{def:addable}Let $T(i,b)=\emptyset$. Say an element $x \in B_{i,c}$ is $\left(i,b\right)$-\emph{addable
}if either
\begin{itemize}
\item $T(i,c)=\emptyset$ and both $S_i+x$ and $C_c+x$ are independent, or;
\item $T(i,c)=x'$ and there is $y\in B_{i,b}$ such that $C_b+y$ and $S_i-x'+y+x$ are independent.
\end{itemize}
In the second case, we say that $S_i-x'+y$
is the result of applying a \emph{simple swap }to $S_i$, and we say
$y$ is a \emph{witness} for the $\left(i,b\right)$-addability
of $x$. In the first case, there is no witness for
the addability of $x$.
\end{defn}

Note that if for some $S_i$ missing an element in column $b$ there is an $\left(i,b\right)$-addable element $x\in B_{i,c}$ which is also independent from $C_c$, then we can increase the size of $U$ by setting $T(i,c)=x$, possibly after applying a simple swap to $S_i$. This unfortunately might not always be possible, but a simple swap still allows us to change which column of $T$ is missing an entry in row $i$.

With this in mind, we study which elements of $S$ can be used in a simple swap.
\begin{defn}
Let $T(i,b)=\emptyset$. We say that a column $c$ with $T(i,c)\neq \emptyset$ is \emph{$\left(i,b\right)$-swappable}
if there is a simple swap making $T(i,c)=\emptyset$ and $T(i,b)=y$ in such a way that $C_b+y$ and $S_i+y-x$ are independent. We say that $y$ is a witness for
the \emph{$\left(i,b\right)$}-swappability\emph{ }of $c$.
\end{defn}

(Basically, a column is \emph{$\left(i,b\right)$}-swappable if
in row $i$ we can add an element to column $b$ and remove one from column $c$ while preserving independence of $C_b$ and $S_i$.) 
\begin{claim}
\label{claim:many-good}Suppose $T(i,b)=\emptyset$. We can either directly increase the size of $U$ or there are at least $n-\left|C_b\right|$ columns which are $\left(i,b\right)$-swappable.
\end{claim}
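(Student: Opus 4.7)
The plan is to exhibit many witnesses $y\in B_{i,b}$ certifying swappability and then convert them into distinct swappable columns via a Hall-type matroid exchange argument. First I would let
\[Y=\{y\in B_{i,b}:C_b+y\text{ is independent}\}.\]
Since $B_{i,b}$ is independent of size $n$ and the flat $\operatorname{span}(C_b)$ has rank $|C_b|$, at most $|C_b|$ elements of the independent set $B_{i,b}$ can lie in this flat, so $|Y|\ge n-|C_b|$.

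Next I would split into two cases. If some $y\in Y$ also satisfies $S_i+y$ independent, then setting $T(i,b)=y$ preserves row and column independence and directly enlarges $U$, giving the first alternative of the claim. Otherwise every $y\in Y$ lies in $\operatorname{span}(S_i)$, so $S_i+y$ contains a unique circuit $C(y)\ni y$, and crucially $S_i-x+y$ is independent for every $x\in C(y)-y$. It then suffices to produce an injection $\psi:Y\to S_i$ with $\psi(y)\in C(y)-y$ for all $y\in Y$: the columns containing the elements $\psi(y)$ are pairwise distinct and each $(i,b)$-swappable with witness $y$, yielding at least $|Y|\ge n-|C_b|$ swappable columns.

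To build $\psi$ I would apply Hall's theorem to the bipartite graph on $Y\cup S_i$ with $y\sim x$ iff $x\in C(y)-y$. For any $Y'\subseteq Y$, writing $Z=\bigcup_{y\in Y'}(C(y)-y)\subseteq S_i$, the circuit property gives $y\in\operatorname{span}(C(y)-y)\subseteq\operatorname{span}(Z)$ for every $y\in Y'$; since $Y'$ is independent (as a subset of the basis $B_{i,b}$), this forces $|Y'|\le r(Z)\le|Z|$, verifying Hall's condition. The step I expect to require most care is precisely this simultaneous exchange: producing one swappable column per witness is immediate from the circuit definition, but ensuring distinctness across witnesses is what forces a matroid-rank argument, and the case split into $Y\subseteq\operatorname{span}(S_i)$ is what makes the Hall condition even formulable. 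Everything else is a routine unpacking of the definitions of addability and swappability.
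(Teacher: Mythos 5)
Your approach is correct in its core idea but takes a genuinely different route from the paper's. The paper proves the claim by a short contradiction argument using only the augmentation property: letting $S\subseteq S_i$ be the elements sitting in $(i,b)$-swappable columns and supposing $|S|<n-|C_b|$, it augments $S$ by one element $y\in I$ (an independent-of-$C_b$ part of $B_{i,b}$), then augments $S+y$ up to a set of the form $S_i+y-x'$ with $x'\notin S$, contradicting that $x'$ is not in a swappable column. You instead construct an explicit injection from witnesses to swappable columns via Hall's theorem applied to the circuit-exchange bipartite graph. Both are valid; the paper's is shorter and uses only augmentation, while your Hall argument is heavier machinery that in effect re-derives, in this context, the same kind of simultaneous-exchange statement the paper defers to \cref{lem:matching}. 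Your approach buys a slightly more constructive statement (an actual system of distinct representatives) that the claim does not require.

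One small gap to flag: your case split implicitly assumes $y\notin S_i$. If $y\in Y\cap S_i$ (which is possible, since different $B_{i,c}$'s can share elements), then $S_i+y=S_i$ is independent, so your argument would place $y$ at $(i,b)$ and declare $|U|$ increased, but this would duplicate an entry in row $i$; correspondingly, in the other branch, $S_i+y$ has no circuit for such $y$, so $C(y)$ is undefined. The fix is easy: for $y\in Y\cap S_i$ the column currently holding $y$ is already $(i,b)$-swappable with witness $y$ (take $y'=y$, $x=y$ in the definition), so one extends the bipartite graph with the edge from $y$ to itself; the Hall verification then goes through verbatim since such $y$ are in $Z$ directly. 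The paper sidesteps this entirely because its augmentation step automatically produces an element outside $S_i$.
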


\begin{proof}
By the augmentation property there is a set $I\subseteq B_{i,b}$ of at least $n-|C_b|$ elements which are independent to $C_b.$ If $|S_i|<n-|C_b|$ then again by the augmentation property applied to $I$ and $S_i$ there is an element $x\in B_{i,b}$ which is also independent to $S_i$. Therefore, setting $T(i,b)=x$ we increase the size of $U.$
Suppose now that $|S_i|\ge n-|C_b|$. Let $S\subseteq S_i$ be the set of all elements of
$S_i$ which are in an $\left(i,b\right)$-swappable column. Let us assume for the sake of contradiction that $\left|S\right|<n-|C_b|$.
This implies $\left|S\right|<\left|S_i\right|$. As $|I|\ge n-|C_b|$ there is a $y\in I$ such that $S+y$ is independent. Using the augmentation
property, we can add $\left|S_i-S\right|-1$ elements of $S_i-S$ to
$S+y$ to obtain an independent set $S_i+y-x'$
for some $x'\in S-S'$ with $T(i,c)=x'$. But this means $c$ is $\left(i,b\right)$-swappable, which is a contradiction.
\end{proof}

Now we show that if $c$ is \emph{$\left(i,b\right)$}-swappable, then all elements of $B_{i,c}$ which are independent to $S_i$ are \emph{$\left(i,b\right)$}-addable,
unless there is an \emph{$\left(i,b\right)$}-addable element not
in $F$.
\begin{claim}
\label{claim:add-if-good}Suppose $T(i,b)=\emptyset$, and let column $c$ be $\left(i,b\right)$-swappable with witness $y$. Either
$S_i+y$ is independent, or for any $x\in B_{i,c}$
independent of $S_i$, $x$ is $\left(i,b\right)$-addable.
\end{claim}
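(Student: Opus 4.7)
The plan is to prove the dichotomy by assuming $S_i+y$ is dependent and verifying that, under this hypothesis, the very same $y$ serves as a witness for the $(i,b)$-addability of every $x\in B_{i,c}$ that is independent of $S_i$. Writing $x'=T(i,c)$ (nonempty, since $c$ is $(i,b)$-swappable), the swappability of $c$ with witness $y$ already guarantees that $C_b+y$ and $S_i+y-x'$ are both independent; the first of these matches one of the two requirements in the second bullet of \cref{def:addable}, so what remains is to show that $S_i-x'+y+x$ is independent.

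First I would collect a few elementary observations: since $S_i$ is independent but $S_i+y$ is not, $y\notin S_i$; since $S_i+x$ is independent, $x\notin S_i$, so in particular $x\ne x'$; and $x\ne y$, for otherwise $S_i+x=S_i+y$ would be dependent. Note also that $y\ne x'$, since $x'\in S_i$ would then force $S_i+y=S_i$ to be independent.

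The heart of the argument is a single application of the augmentation property to the independent sets $S_i+x$ (of size $|S_i|+1$) and $S_i+y-x'$ (of size $|S_i|$). This yields an element $z\in (S_i+x)\setminus(S_i+y-x')$ such that $(S_i+y-x')+z$ is independent. The observations above identify this symmetric difference exactly as $\{x,x'\}$. The choice $z=x'$ would make $(S_i+y-x')+x'=S_i+y$ independent, contradicting our standing assumption; hence $z=x$, and so $S_i-x'+y+x$ is independent, exactly as required.

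The main obstacle, such as it is, amounts to careful bookkeeping: pinning down the two-element symmetric difference and ruling out the degenerate possibilities $y\in S_i$, $x\in S_i$, $x=y$, or $y=x'$. Once those are in place, the proof is a direct application of augmentation, and the conceptual point is that the hypothesis \emph{``$S_i+y$ is dependent''} is precisely what excludes the bad augmentation choice ($z=x'$) and forces the useful one ($z=x$).
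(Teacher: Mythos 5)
Your argument is correct and matches the paper's proof essentially exactly: both apply the augmentation property to the pair $I=S_i+x$ and $J=S_i+y-x'$, observe that the augmenting element from $I\setminus J$ must be $x$ or $x'$, and read off the two alternatives in the claim (you simply phrase it contrapositively, assuming $S_i+y$ dependent to force $z=x$). One small terminology slip: the set you pin down as $\{x,x'\}$ is the set difference $(S_i+x)\setminus(S_i+y-x')$, not the symmetric difference; this has no bearing on the validity of the argument.
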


\begin{proof}
Let $T(i,c)=x'$. Consider some $x\in B_{i,c}$ independent to $S_i$. Let $I=S_i+x$ and $J=S_i+y-x'$. Note that $J$ is independent by the $(i,b)$-swappability of $c$ so by the augmentation property, there is an element of $I\backslash J$ that is independent of $J$; this element is either $x'$ or $x$. In the former case $S_i+y$
is independent. In the latter case, $S_i+y-x'+x$ is independent, showing that $x$ is $\left(i,b\right)$-addable. 
\end{proof}

Note that if the first case of \cref{claim:add-if-good} occurs, it means we can set $T(i,b)=y$ and increase the size of $U$, while preserving the independence conditions.
The above lemma is telling us that either we can increase the size of $U$ or there are many elements which we can choose as $T(i,c)$ while preserving independence of $S_i$. Potentially these choices could cause $C_c$ to be dependent, however this would mean that we can remove some other entry in column $c$. While this will not increase the number of entries in $U$ it offers us a way to ``move'' an empty cell from $(i,b)$ to a position in column $c$. This motivates the following definition, which was essentially implicit in \cite{us} (in the general case of Kahn's basis conjecture it is more convenient to make this definition explicit).

\begin{defn}
Let $T(i,b)=\emptyset$ and $T(j,c)=y'\neq \emptyset$. We say that position $(j,c)$ is $(i,b)$-\textit{removable} if there is an $(i,b)$-addable element $x \in B_{i,c}$ such that $C_c+x-y'$ is independent. 
\end{defn}

\begin{claim}\label{claim:removability}
Let $T(i,b)=\emptyset$. If there are $r$ distinct $(i,b)$-addable elements in $B_{i,c}$ then either one such element is independent of $C_c$ or there are at least $r$ positions in column $c$ which are $(i,b)$-removable. 
\end{claim}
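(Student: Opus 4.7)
The plan is to use fundamental circuits to identify the removable positions and then to leverage the fact that $B_{i,c}$ is a \emph{basis} (not merely an independent set) to obtain the required count. Let $x_1,\dots,x_r\in B_{i,c}$ be the $r$ distinct $(i,b)$-addable elements, and suppose none is independent of $C_c$ (otherwise we are done immediately). The set $C_c$ is independent throughout the algorithm, so for each $k$ the dependent set $C_c+x_k$ contains a circuit $\mathcal{C}_k$, which must pass through $x_k$ since $C_c$ itself is independent. Setting $Y_k:=\mathcal{C}_k\setminus\{x_k\}\subseteq C_c$, a standard matroid fact (provable from the augmentation property) states that $C_c+x_k-y'$ is independent for every $y'\in Y_k$, so each such $y'$ identifies a position $(j,c)$ with $T(j,c)=y'$ which is $(i,b)$-removable, witnessed by $x_k$.

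It remains to show that $Y:=\bigcup_{k=1}^r Y_k\subseteq C_c$ has size at least $r$, since distinct elements of $Y$ correspond to distinct positions in column $c$. The crucial observation is that $\{x_1,\dots,x_r\}$ is independent, being $r$ distinct elements of the basis $B_{i,c}$. If instead $|Y|<r$, then the augmentation property applied to the independent sets $Y\subseteq C_c$ and $\{x_1,\dots,x_r\}$ produces some $x_k$ with $Y+x_k$ independent; but $Y+x_k\supseteq Y_k+x_k=\mathcal{C}_k$ is a circuit and hence dependent, a contradiction.

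The only really substantive step is invoking that $B_{i,c}$ is a \emph{basis} rather than merely some independent set: without independence of $\{x_1,\dots,x_r\}$ we would have no lower bound on $|Y|$. Everything else reduces to direct applications of the augmentation property, together with the elementary fundamental-circuit fact recalled above.
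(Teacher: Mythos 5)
Your proof is correct, and it takes a genuinely different route from the paper. The paper's proof is a ``global'' augmentation argument: it first augments $X$ (the set of $r$ addable elements) by $C_c$ to build a single independent set $I=X+C_c-Y$ with $|Y|=r$, and then for each $y\in Y$ augments $C_c-y$ by $I$ to recover an addable replacement from $X$. Your proof is ``local'': for each $x_k$ you pass to the fundamental circuit $\mathcal{C}_k$ of $x_k$ in $C_c$, observe that every element of $Y_k=\mathcal{C}_k-x_k$ is a removable position witnessed by $x_k$, and then prove $\left|\bigcup_k Y_k\right|\ge r$ by a clean augmentation argument against the independent set $X$. Both proofs ultimately lean on the same two facts, namely that $X$ is independent (a subset of the basis $B_{i,c}$) and that the augmentation property lets one exchange elements into $C_c$, but your version isolates the ``standard matroid fact'' about fundamental circuits as an explicit ingredient, which makes the mechanism of removability more transparent, at the cost of invoking one more piece of matroid theory (circuit elimination, or its augmentation-based proof). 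The paper's version is slightly more self-contained, using only the augmentation axiom directly. One small remark: your counting argument tacitly uses that $X$ is disjoint from $C_c$ when applying augmentation to $Y$ and $X$; this is harmless since any $x_k\in C_c$ would already satisfy the first alternative of the claim, but it is worth stating (the paper's proof makes the same implicit assumption).
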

\begin{proof}
Let $X$ denote the set of $(i,b)$-addable elements in $B_{i,c}$, so that $|X|=r$. If $|S|=r>|C_c|$ then by the augmentation property there is an element $x \in S$ which is independent of $C_c$, so let us assume $r \le |C_c|.$ Once again the augmentation property implies that there is a set $Y \subseteq C_c$ such that $|Y|=r$ and $I=X+C_c-Y$ is independent. For any $y \in Y$, taking $J=C_c-y$ and using the augmentation property, there is an element $x\in I \setminus J$ such that $C_c-y+x$ is independent, showing that the position of $y$ is removable.  
\end{proof}

Note again that if the first case of \cref{claim:removability} occurs, meaning that there is an $(i,b)$-addable element $x\in B_{i,c}$ which is independent of $C_c$, then we can increase the size of $U$ by letting $T(i,c)=x$, possibly after a simple swap.

The following claim gives a good illustration of how to use the ideas developed in this section to find many addable elements. It will be very useful later on.

\begin{claim}\label{claim:1-addability} 
Let $T(i,b)=\emptyset$. Then either we can increase the size of $U$ or there are at least $(n-|S_i|)\left(n-|C_b|\right)$ elements which are $\left(i,b\right)$-removable.
\end{claim}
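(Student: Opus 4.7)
The plan is to chain together the three preceding claims (\cref{claim:many-good}, \cref{claim:add-if-good}, \cref{claim:removability}) to translate ``many swappable columns'' into ``many addable elements per column'' into ``many removable positions per column,'' and then sum over columns.

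First, I would apply \cref{claim:many-good} to $(i,b)$. If that claim gives a direct increase of $|U|$, we are done, so assume instead that there are at least $n-|C_b|$ columns which are $(i,b)$-swappable. Fix any such column $c$ with swap witness $y$. By definition of swappability, $C_b+y$ is independent. Now invoke \cref{claim:add-if-good}: either $S_i+y$ is independent, in which case setting $T(i,b)=y$ increases $|U|$ and we are done, or every $x\in B_{i,c}$ with $S_i+x$ independent is $(i,b)$-addable. Since $|B_{i,c}|=n$ and $S_i$ has size at most $n$, the augmentation property applied to the basis $B_{i,c}$ and $S_i$ produces at least $n-|S_i|$ elements of $B_{i,c}$ independent from $S_i$, each of which is therefore $(i,b)$-addable.

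Next, apply \cref{claim:removability} inside column $c$ to this set of at least $n-|S_i|$ distinct $(i,b)$-addable elements of $B_{i,c}$. Either one such element is independent of $C_c$, in which case (as remarked after \cref{claim:removability}) we can place it at $(i,c)$ (possibly after a simple swap) and increase $|U|$, or column $c$ contains at least $n-|S_i|$ positions that are $(i,b)$-removable. Assuming we never fall into a case that increases $|U|$, we now have at least $n-|S_i|$ removable positions inside each of the at least $n-|C_b|$ swappable columns $c$. Removable positions in different columns are automatically distinct as sets of cells, so summing over the swappable columns yields at least $(n-|C_b|)(n-|S_i|)$ distinct $(i,b)$-removable positions, as required.

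There is no real obstacle here beyond bookkeeping: the three earlier claims have been designed exactly so that this composition works, and the only care needed is to notice that in every case of \cref{claim:add-if-good} and \cref{claim:removability} where we do \emph{not} get the ``many removable positions'' conclusion, the alternative is precisely an opportunity to enlarge $U$, which is the other conclusion of the claim we are proving.
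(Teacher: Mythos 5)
Your proposal is correct and follows essentially the same approach as the paper: chain \cref{claim:many-good}, \cref{claim:add-if-good}, and \cref{claim:removability} in sequence, with each ``bad'' case offering a direct increase of $|U|$. If anything, your wording is slightly more careful than the paper's (the paper phrases the application of \cref{claim:removability} ``for each such addable element,'' whereas it is really applied once per column to the whole set of $n-|S_i|$ addable elements in that column, as you do).
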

\begin{proof}
By \cref{claim:many-good} there are at least $n-|C_b|$ columns that
are $(i,b)$-swappable. For
each such column $c$, by the augmentation property, there are at least $n-|S_i|$ elements $x\in B_{i,c}$
independent to all the elements of $S_i$. Each of these elements is $\left(i,b\right)$-addable
by \cref{claim:add-if-good}, or else we can increase the size of $U$ directly. For each such addable element, by \cref{claim:removability} we can again either increase the size of $U$ directly or there are $n-|S_i|$ positions which are $(i,b)$-removable. That is to say, there are at least $(n-|S_i|)\left(n-|C_b|\right)$
positions which are $\left(i,b\right)$-removable, as claimed.
\end{proof}

In our proof of \cref{thm:1/2-kahn} we will need the following lemma. It will allow us to ensure that the new removable positions we find which are in the same column are still distinct.

\begin{lem}
\label{lem:matching}Then for each $B_{i,c}$, we
can find an injection $\phi_{i,c}:S_i\to B_{i,c}$ such that for all
$x \in S_i$, $\phi_{i,c}(x)$
is independent of $S_i-x$.
\end{lem}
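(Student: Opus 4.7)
The plan is to set this up as a bipartite matching problem and apply Hall's theorem. Define a bipartite graph $H$ with vertex classes $S_i$ and $B_{i,c}$, where $x \in S_i$ is adjacent to $y \in B_{i,c}$ precisely when $y$ is independent of $S_i - x$. The statement we want is exactly the existence of an $H$-matching saturating $S_i$, so by Hall's theorem it suffices to verify that $|N_H(X)| \ge |X|$ for every $X \subseteq S_i$.

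Fix $X \subseteq S_i$, and write $R = B_{i,c} \setminus N_H(X)$, the set of elements in $B_{i,c}$ not independent of $S_i - x$ for any $x \in X$. The key claim is that every $y \in R$ lies in the closure $\operatorname{cl}(S_i \setminus X)$. To see this, suppose for contradiction that $(S_i \setminus X) + y$ is independent for some $y \in R$. Applying the augmentation property repeatedly, adding elements of $S_i$ to $(S_i \setminus X) + y$, we can build an independent set of size $|S_i|$ of the form $(S_i \setminus X) + y + X'$ with $X' \subseteq X$ and $|X'| = |X| - 1$. Picking any $x \in X \setminus X'$, this says $(S_i - x) + y$ is independent, contradicting $y \in R$.

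Thus $R \subseteq \operatorname{cl}(S_i \setminus X)$, and since $R \subseteq B_{i,c}$ is independent while $\operatorname{cl}(S_i \setminus X)$ has rank $|S_i| - |X|$, we obtain $|R| \le |S_i| - |X|$. Consequently
\[
|N_H(X)| = |B_{i,c}| - |R| \ge n - (|S_i| - |X|) \ge |X|,
\]
where the last inequality uses $|S_i| \le n$. Hence Hall's condition holds and the desired injection $\phi_{i,c}$ exists.

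The only step that requires any thought is the closure claim, and it is a quick consequence of the augmentation property as outlined above; the rest is bookkeeping. No obstacle is anticipated beyond carefully setting up Hall's condition in the right direction (matching $S_i$ into $B_{i,c}$ rather than the reverse), so that the neighborhood bound comes out of a single closure/rank comparison.
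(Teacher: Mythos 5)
Your proof is correct and takes essentially the same route as the paper: the same bipartite graph, the same use of Hall's theorem, and the same augmentation trick to show that any $y$ with $(S_i\setminus X)+y$ independent has a neighbour in $X$. The only cosmetic difference is that you count the non-neighbours $R$ via an explicit closure/rank comparison, whereas the paper counts the neighbours directly via the augmentation property; these are contrapositive phrasings of the same bound.
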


\begin{proof}
Consider the bipartite graph $G$ where the first part consists of the elements of $S_i$ and
the second part consists of the elements of $B_{i,c}$, with an edge
between $x\in S_i$ and $y\in B_{i,c}$ if $y$ is independent
of $S_{i}-x$. We use Hall's theorem
to show that there is a matching in this bipartite graph covering $S_i$. Indeed,
consider some $W\subseteq S_i$. By the augmentation property, there
are at least $\left|W\right|$ elements $y\in B_{i,c}$ such that $S_i-W+y$
is an independent set, and again using the augmentation property,
each of these can be extended to an independent set of the form $S_i+y-x$
for some $x\in W$. That is to say, $W$ has at least
$\left|W\right|$ neighbours in $G$.
\end{proof}

\subsection{Cascading swaps}

Informally speaking, for any $i_0$ for which $S_{i_0}$ is not a
basis, we have showed that either we can increase the size of $U$, or there are many positions $\left(i_1,c_1\right)\in U$ which we can free up after performing a simple swap. In the latter case, we find a row with as many removable elements as possible and then repeat the argument starting from this row, either finding a way to increase the size of $U$ or finding more positions that we can free up after a sequence of two swaps. We then iterate this argument, continually increasing the number of positions we can free up. This cannot continue for too long, and eventually we will find a way to increase the size of $U$ via a cascading sequence of swaps, as desired.

The next definition makes precise the cascades
that we consider. We remark that the definitions of addability and removability are with respect to a table $T$, and it makes sense to say that an element or position is addable or removable in a different table $T'$.

\begin{defn}
Consider a sequence of distinct rows $i_{0},\dots,i_{\ell-1},i_{\ell}$.
Say a position $\left(i_{\ell},c_{\ell}\right)$
is \emph{cascade-removable }with state $T_\ell$, with respect to $i_{0},\dots,i_{\ell-1}$, if
\begin{itemize}
    \item $\ell=0$ and $T(i_0,c_0)=\emptyset$, and $T_0:=T$, or; 
    \item $\ell>0$, and there is a position $(i_{\ell-1}, c_{\ell-1})$ which is cascade-removable with state $T_{\ell-1}$, with respect to $i_{0},\dots,i_{\ell-1}$, such that $(i_{\ell}, c_{\ell})$ is $(i_{\ell-1}, c_{\ell-1})$-removable in $T_{\ell-1}$. Moreover $T_\ell$ is the result of performing this removal in $T_{\ell-1}.$
\end{itemize}
We will call the sequence of operations resulting in the removal of $(i_\ell,c_\ell)$ in $T_\ell$ a \textit{cascade}, and we require that there is a cascade in which all the columns $c_i$ are distinct. We write $Q\left(i_{0},\dots,i_{\ell-1}\right)$ for the set of all
elements outside $i_{0},\dots,i_{\ell-1}$ which are cascade-removable
with respect to $i_{0},\dots,i_{\ell-1}$.
\end{defn}
We remark that this notion is almost the same as the notion of cascade-addability defined in \cite{us}. The only difference is that the notion of cascade-addability essentially allows for the possibility that a new element can be directly added into $T$, increasing the size of $U$. In the general case of Kahn's basis conjecture, it is more convenient to first consider cascades that can free up positions (keeping the size of $U$ constant), then separately consider the ways to add an element after this operation.

In the next claim, we show that given $i_{0},\dots,i_{\ell-1}$,
we can either increase the size of $U$ or it is possible to choose $i_{\ell}$ in such a way that the number of cascade-removable elements increases.
\begin{claim}
\label{claim:cascade-increase}Consider a sequence of distinct rows
$i_{0},\dots,i_{\ell-1}$ with $1 \le \ell<f$ (recall that $f$ denotes the number of rows of $T$).
Then either we can increase the size of $U$, or we can
choose $i_{\ell}\ne i_{0},\dots,i_{\ell-1}$ such that
\begin{equation}
\left|Q\left(i_{0},\dots,i_{\ell}\right)\right|\ge\frac{\left|Q\left(i_{0},\dots,i_{\ell-1}\right)\right|}{f-\ell}\cdot\left(n-f-\ell\right)-(\ell+1) n.\label{eq:recurrence}
\end{equation}
\end{claim}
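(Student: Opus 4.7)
The plan is to first pigeonhole on rows to locate a suitable $i_\ell$, and then to extend each cascade ending at a position in row $i_\ell$ by one more step using \cref{claim:1-addability}, aggregating these extensions across cascades with the help of \cref{lem:matching}.

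Since every position in $Q(i_0,\dots,i_{\ell-1})$ lies in a row outside $\{i_0,\dots,i_{\ell-1}\}$, these positions are distributed among $f-\ell$ rows. By pigeonhole some row $i_\ell\notin\{i_0,\dots,i_{\ell-1}\}$ contains $R\ge|Q(i_0,\dots,i_{\ell-1})|/(f-\ell)$ of them. For each such cascade-removable $(i_\ell,c_\ell)$, fix a witnessing cascade ending at a state $T^{(c_\ell)}_\ell$ in which $(i_\ell,c_\ell)$ is empty. Since the rows $i_0,\dots,i_\ell$ are distinct, row $i_\ell$ is modified only at the final step of the cascade, so $|S_{i_\ell}|_{T^{(c_\ell)}_\ell}=|S_{i_\ell}|_T-1$; likewise $|C_{c_\ell}|_{T^{(c_\ell)}_\ell}\le f$ since each column contains at most $f$ entries.

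Now apply \cref{claim:1-addability} to $(i_\ell,c_\ell)$ in $T^{(c_\ell)}_\ell$: either we directly increase $|U|$ and are done, or we obtain at least $n-f$ $(i_\ell,c_\ell)$-swappable columns, each giving via \cref{claim:removability} several distinct $(i_\ell,c_\ell)$-removable positions. After discarding the $\le\ell$ columns $c_0,\dots,c_{\ell-1}$ already used in the cascade, each cascade thus extends to positions in at least $n-f-\ell$ fresh columns, and any such extension lies in $Q(i_0,\dots,i_\ell)$ provided the row index of the removed position is outside $\{i_0,\dots,i_{\ell-1}\}$. To combine these contributions across the $R$ cascades without overcounting, we use \cref{lem:matching}: for each column $c$, the injection $\phi_{i_\ell,c}\colon S_{i_\ell}\to B_{i_\ell,c}$ maps the distinct removed row entries $T(i_\ell,c_\ell)$ (one per cascade) to distinct elements of $B_{i_\ell,c}$, and distinct addable elements yield distinct removable positions in column $c$ by the same argument as in \cref{claim:removability}. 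Summing the contributions over the $R$ cascades produces at least $R(n-f-\ell)$ distinct cascade-extensions, from which we subtract the at most $(\ell+1)n$ extensions that might land in the forbidden rows $\{i_0,\dots,i_\ell\}$ (at most $n$ per forbidden row) to obtain the desired bound \cref{eq:recurrence}.

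The main obstacle is the distinctness and validity argument underlying the aggregation step. One must verify that the canonical elements $\phi_{i_\ell,c}(T(i_\ell,c_\ell))$ supplied by \cref{lem:matching} genuinely produce $(i_\ell,c_\ell)$-addable elements in the corresponding shifted state $T^{(c_\ell)}_\ell$ (the simple-swap witness may need to be extracted from the cascade framework itself rather than from $\phi$ alone), and that the resulting removable positions in a common column $c$ coming from different cascades do not coincide. Once this is established, the error term $(\ell+1)n$ is precisely large enough to absorb the loss from extensions whose row index happens to belong to $\{i_0,\dots,i_\ell\}$, and the claim follows.
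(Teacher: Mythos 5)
Your plan follows the paper's proof almost step for step: pick $i_\ell$ by pigeonhole, extend each cascade by one step using the swappability/addability machinery, and control multiplicities via the injections $\phi_{i_\ell,c}$ from \cref{lem:matching}. (Packaging the extension step through \cref{claim:1-addability} rather than invoking \cref{claim:many-good} and \cref{claim:add-if-good} directly is a cosmetic difference.) However, what you flag at the end as ``the main obstacle'' --- that the canonical elements $\phi_{i_\ell,c}(T(i_\ell,c_\ell))$ are genuinely $(i_\ell,c_\ell)$-addable in the shifted state, and that removable positions in a fixed column $c$ arising from different cascades are distinct --- is precisely the content of the claim, and you leave it unproven, ending with ``once this is established \ldots\ the claim follows.'' That is a genuine gap: without that verification the count $R(n-f-\ell)$ is only an upper bound on what you might hope for, not a lower bound you have established.

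The way the paper closes this gap is worth noting. Because the cascade only modifies row $i_\ell$ by deleting the single entry $T(i_\ell,c_\ell)$, the defining property of $\phi_{i_\ell,c}$ (that $\phi_{i_\ell,c}(x)$ is independent of $S_{i_\ell}-x$) is exactly what \cref{claim:add-if-good} needs to certify $(i_\ell,c_\ell)$-addability in the shifted state; no separate extraction of a swap witness from the cascade is required. For distinctness, the paper does not re-run \cref{claim:removability} with $r>1$ across states (which, as you hint, would be delicate since the states differ). Instead it observes that either some $\phi_{i_\ell,c}(T(i_\ell,c_\ell))$ can be used to complete the cascade and increase $|U|$, or all these elements already occupy positions of $T$; in the latter case, the positions holding $\phi_{i_\ell,c}(T(i_\ell,c_\ell))$ are $(i_\ell,c_\ell)$-removable essentially for free, and injectivity of $\phi_{i_\ell,c}$ in the argument $T(i_\ell,c_\ell)$ makes these positions automatically distinct across cascades. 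You should incorporate this ``either add, or it sits in $T$ at a distinct location'' dichotomy to turn your outline into a proof.
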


\begin{proof}
Let us choose
$i_{\ell}\in [f]\setminus\left\{ i_{0},\dots i_{\ell-1}\right\} $ with
the maximum number of elements of $Q\left(i_{0},\dots,i_{\ell-1}\right)$.
Let $Q=i_{\ell}\cap Q\left(i_{0},\dots,i_{\ell-1}\right)$, so
\[
\left|Q\right|\ge\frac{\left|Q\left(S_{0},\dots,S_{\ell-1}\right)\right|}{f-\ell}.
\]
Apply \cref{lem:matching} to row $i_{\ell}$ to obtain an injection $\phi_{i_\ell,b}$ for every column $b$. We suppress the dependence on $i_\ell$ and just write $\phi_b.$

First we will show that unless we can increase the size of $U$ there are at least $\left|Q\right|\left(n-\ell-f\right)$ cascade-removable positions (with respect to $i_{0},\dots,i_{\ell}$) which are not in any of the rows $i_{0},\dots,i_{\ell}$. We start by showing that for any $\left(i_{\ell},c_{\ell}\right)\in Q$ (which is, by the definition of $Q$, cascade-removable with respect to $i_{0},\dots,i_{\ell-1}$, say with state $T_{\ell-1}$), there are $n-f$ columns $c$ for which $\phi_{c}(i_{\ell},c_{\ell})$ is $(i_\ell,c_\ell)$-addable in $T_{\ell-1}$. This follows from \cref{claim:many-good}, which
implies there are at least $n-\left|C_{c_{\ell}}\right|\ge n-f$ columns which are $\left(i_\ell,c_{\ell}\right)$-swappable, and \cref{claim:add-if-good}, which implies that for each such column $c$, $\phi_{c}\left(\left(i_{\ell},c_{\ell}\right)\right)$
is indeed $\left(i_{\ell},c_{\ell}\right)$-addable. If any of these elements does not appear in $T$ we can add it (possibly after a simple swap) after performing a cascade to obtain $T_{\ell-1}$, thereby increasing the size of $U$. So, we can assume that all $(n-f)$ elements of the form $\phi_{c}\left(\left(i_{\ell},c_{\ell}\right)\right)$ appear in $T$. Now, the cascade that removes $\left(i_{\ell},c_{\ell}\right)$ affects at most $\ell$ columns apart from $c_\ell$, so the remaining columns are equal in $T$ and $T_\ell$. Ignoring the affected columns, we have found at least $(n-f-\ell)$ positions (at locations of) $\phi_{c}\left(\left(i_{\ell},c_{\ell}\right)\right)$ which are cascade-removable with respect to $i_0,\dots,i_\ell$, unless they are in the rows $i_{0},\dots,i_{\ell}$. Considering all $\left(i_{\ell},c_{\ell}\right)\in Q$ gives $|Q|(n-f-\ell)$ such positions, at most $(\ell+1)n$ of which are in the rows $i_0,\dots,i_\ell$.
\end{proof}

Now, we want to iteratively apply \cref{claim:cascade-increase} starting
from some row $i_{0}$, to obtain a sequence $i_{0},i_{1},\dots,i_{h}$.
There are two ways this process can stop: either we find a way to
increase the size of $U$, in which case we are done, or else we
run out of distinct rows (that is, $h=f-1$). We want to show that
this latter possibility cannot occur by deducing from \cref{eq:recurrence}
that the $\left|Q\left(i_{0},\dots,i_{\ell}\right)\right|$ increase
in size at an exponential rate: after logarithmically many steps there
will be so many cascade-removable positions that they cannot all be contained
in $U$, and it must be possible to increase the size of $U$.

A slight snag with this plan is that \cref{eq:recurrence} only yields
an exponentially growing recurrence if the ``initial term'' is rather
large. To be precise, let $C$ (depending on $\varepsilon$) be sufficiently
large such that 
\begin{equation}
C\left(1+\varepsilon/2\right)^{\ell-1}\frac{1}{1-\varepsilon}-\ell-1\ge C\left(1+\varepsilon/2\right)^{\ell}\label{eq:C}
\end{equation}
for all $\ell\ge1$.
\begin{claim}
\label{claim:recurrence-estimate}For $S_{0},\dots,S_{h}$ as above,
suppose that $\left|Q\left(S_{0}\right)\right|\ge Cn$ or $\left|Q\left(S_{0},S_{1}\right)\right|\ge Cn$.
Then, for $0<\ell\le\min\left\{ h,\varepsilon n/4\right\} $, we have
\[
\left|Q\left(S_{0},\dots,S_{\ell}\right)\right|\ge C\left(1+\varepsilon/2\right)^{\ell-1}n.
\]
\end{claim}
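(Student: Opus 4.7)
The plan is to proceed by induction on $\ell$, iteratively applying the recurrence from \cref{claim:cascade-increase}. The crucial observation is that under the constraints $\ell \le \varepsilon n/4$ and $f=(1-\varepsilon)n/2$, we have $f-\ell \le (1-\varepsilon)n/2$ and $n-f-\ell \ge (1+\varepsilon)n/2 - \varepsilon n/4 = (2+\varepsilon)n/4$, so the multiplicative factor $(n-f-\ell)/(f-\ell)$ appearing in \cref{eq:recurrence} is bounded below by $(1+\varepsilon/2)/(1-\varepsilon)$, which is strictly greater than $1$. This is what drives the geometric growth.

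For the base case $\ell=1$, we need $|Q(S_0,S_1)| \ge Cn$. If this is given directly by the hypothesis, we are done. Otherwise $|Q(S_0)| \ge Cn$, and one application of \cref{claim:cascade-increase} gives $|Q(S_0,S_1)| \ge Cn\cdot(1+\varepsilon/2)/(1-\varepsilon) - 2n$, which is at least $Cn$ by \cref{eq:C} specialised at $\ell=1$.

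For the inductive step with $\ell \ge 2$, suppose $|Q(S_0,\dots,S_{\ell-1})| \ge C(1+\varepsilon/2)^{\ell-2}n$. Plugging this into the recurrence and using the ratio bound above yields
\[
|Q(S_0,\dots,S_\ell)| \;\ge\; C(1+\varepsilon/2)^{\ell-2} n \cdot \frac{1+\varepsilon/2}{1-\varepsilon} - (\ell+1)n \;=\; n\left(\frac{C(1+\varepsilon/2)^{\ell-1}}{1-\varepsilon} - (\ell+1)\right).
\]
By \cref{eq:C}, the right-hand side is at least $C(1+\varepsilon/2)^\ell n \ge C(1+\varepsilon/2)^{\ell-1} n$, closing the induction with room to spare.

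The main subtlety is the disjunction in the hypothesis: the role of allowing either $|Q(S_0)| \ge Cn$ or $|Q(S_0,S_1)| \ge Cn$ as the starting condition is precisely to ensure there is enough ``initial mass'' to launch the recurrence; if only a weak bound on $|Q(S_0)|$ were available, the additive loss $-(\ell+1)n$ could not be beaten. Beyond this, everything reduces to a routine geometric-versus-arithmetic comparison calibrated by the choice of $C$ in \cref{eq:C}.
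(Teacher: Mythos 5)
Your proof is correct and follows essentially the same route as the paper's: induction on $\ell$ using the recurrence from \cref{claim:cascade-increase}, with the calculation $(n-f-\ell)/(f-\ell)\ge(1+\varepsilon/2)/(1-\varepsilon)$ (the paper uses the slightly weaker but equivalent-in-effect bound $(n-f-\ell)/f\ge 1/(1-\varepsilon)$) and \cref{eq:C} to absorb the additive loss and close the induction.
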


\begin{proof}
Let $Q_{\ell}=Q\left(i_{0},\dots,i_{\ell}\right)$. We proceed by
induction. First observe that if $|Q_0| \ge Cn$ then \cref{eq:recurrence} and \cref{eq:C} for $\ell=1$ imply $|Q_1|\ge Cn$, giving us the base case. If $\left|Q_{\ell}\right|\ge C\left(1+\varepsilon/2\right)^{\ell-1}n$
then
\begin{align*}
\left|Q_{\ell+1}\right| & \ge\frac{C\left(1+\varepsilon/2\right)^{\ell-1}n}{f-\ell}\cdot\left(n-f-\ell\right)-(\ell+1) n\\
 & \ge\left(C\left(1+\varepsilon/2\right)^{\ell-1}\frac{\left(n-f-\ell\right)}{f}-\ell-1\right)n\\
 & \ge \left(C\left(1+\varepsilon/2\right)^{\ell-1}\frac{1}{1-\varepsilon}-\ell-1\right)n\\
 & \ge C\left(1+\varepsilon/2\right)^{\ell}n.\tag*{\qedhere}
\end{align*}
\end{proof}
If we could choose $i_{0},i_{1}$ such that $\left|Q\left(i_{0}\right)\right|\ge Cn$
or $\left|Q\left(i_{0},i_{1}\right)\right|\ge Cn$, then we would be done, as in \cite{us}. There may not exist suitable starting rows $i_{0},i_{1}\in\S$, but in the next
section we will show that if at least $\varepsilon n/2$ of the $S_i$
in $S$ are not bases, then it is possible to modify $T$
without changing the size of $U$, in such a way that suitable $i_{0},i_{1}$
exist.

\subsection{Increasing the amount of initial addable elements}

Let us assume there are at least $\varepsilon n/2$ $S_i$ which are not
bases. Recall the choice of $C$ from the previous section, and let $D=2C+4$, so that
$D\left(n-f-1\right)-2n\ge Cn$ for large $n$. We prove the following (for large $n$).

\begin{claim}
\label{claim:many-missing}We can modify $T$ in such a way that at least one of the following holds.
\begin{enumerate}
\item [(a)]The size of $U$ increases;
\item [(b)]the size of $U$ does not change, and there is a row $i_{0}$
missing entries in at least $D$ columns;
\item [(c)]the size of $U$ does not change, and there are now distinct rows
$i_{0},i_{1}$ such that $i_{1}$ contains at least $D$ elements
that are $\left(i_{0},b\right)$-removable.
\end{enumerate}
\end{claim}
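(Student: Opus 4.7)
Our strategy is to modify $T$ by a sequence of cascade-removals, each preserving $|U|$, until one of the three cases holds. We first check the trivial case: if any row currently has at least $D$ empty positions, case (b) is immediate. Otherwise, every row has $p_i := n - |S_i| < D$ empty positions, while the hypothesis gives $\sum_i p_i \ge \varepsilon n/2$.

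The main idea is to designate a target row $i_0$ (with $p_{i_0} \ge 1$) and iteratively move empty positions from other non-basis rows into $i_0$ via cascade-removals. For each non-basis row $i_1 \ne i_0$ with empty $(i_1, b_1)$, we apply \cref{claim:1-addability} to $(i_1, b_1)$: either we can directly increase $|U|$ (case (a)), or we obtain at least $p_{i_1}(n - |C_{b_1}|) \ge p_{i_1}(n - f)$ positions that are $(i_1, b_1)$-removable. If some row $i_1'$ contains at least $D$ of these positions, case (c) is witnessed (with $i_1$ playing the role of the empty source). Otherwise, provided some removable position lies in row $i_0$, we perform the corresponding cascade-removal. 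Checking both cases of \cref{def:addable}, this operation fills one empty in row $i_1$ and creates a new empty in row $i_0$: thus $p_{i_0}$ increases by $1$ and $p_{i_1}$ decreases by $1$, with $|U|$ unchanged.

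We iterate these transfers. Once $p_{i_0}$ reaches $D$ we have case (b). And once $p_{i_0}$ reaches the pigeonhole threshold $\lceil D(f-1)/(n-f) \rceil$ (which is strictly below $D$ for $\varepsilon > 0$), applying \cref{claim:1-addability} to an empty $(i_0, b)$ in the current $T$ yields at least $p_{i_0}(n-f) \ge D(f-1)$ removable positions distributed across the other $f-1$ rows, forcing some row $i_1$ to contain at least $D$ such positions and placing us in case (c).

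The main obstacle is the step in which no cascade-removal from any non-basis $i_1$ can target row $i_0$ (every removable position from each empty $(i_1, b_1)$ lies in a row other than $i_0$). We plan to handle this by a double-counting argument over all non-basis rows: since $\sum_i p_i \ge \varepsilon n/2$ and each $(i_1,b_1)$ generates at least $p_{i_1}(n-f)$ removable positions by \cref{claim:1-addability}, confining all these to the remaining $f-2$ rows forces enough concentration that either case (a) becomes directly accessible via a simple swap, or some row already carries $\ge D$ removable positions for a single empty source, giving case (c). The calibration $D = 2C+4$, chosen so that $D(n-f-1) - 2n \ge Cn$, ensures that once case (b) or (c) is established the corresponding $|Q(i_0)|$ or $|Q(i_0,i_1)|$ exceeds $Cn$, as is needed to feed into \cref{claim:recurrence-estimate}.
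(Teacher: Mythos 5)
The paper's proof of \cref{claim:many-missing} is a \emph{global} argument: it defines $E$ as the largest integer such that at least $M_E=(\varepsilon/(4D^2))^E n$ rows are missing entries in $\ge E$ columns, builds an auxiliary digraph with an arc from $j$ to $i$ whenever $S_j$ contains $\ge E+1$ positions that are $(i,b_i)$-removable, finds $M_{E+1}$ vertex-disjoint $(E+1)$-out-stars (the centres are discovered, not chosen in advance), and uses each out-star to push $E+1$ empty positions into its centre, thereby increasing $E$; iterating reaches $E\ge D$ (case (b)) unless (a) or (c) occurs. Your proposal instead fixes a single accumulator row $i_0$ in advance and tries to move empty positions into it one at a time. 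This is a genuinely different strategy, and it does not go through as written.

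The gap is exactly at your ``main obstacle,'' and the proposed fix does not close it. Suppose that for every non-basis $i_1\ne i_0$, none of the $(i_1,b_1)$-removable positions lies in row $i_0$, and no row has $\ge D$ such positions for a single source. From \cref{claim:1-addability} you get $\ge p_{i_1}(n-f)$ removable positions for source $i_1$. But for a row with $p_{i_1}=1$ (which may constitute \emph{all} of the $\varepsilon n/2$ non-basis rows), this is only $n-f\approx (1+\varepsilon)n/2$ positions spread over $f-1\approx(1-\varepsilon)n/2$ other rows, i.e.\ fewer than $2$ per row on average --- nowhere near the threshold $D$, so no concentration is forced. The aggregate double-count fails for the same reason: the total number of (position, source) pairs is at least roughly $(\varepsilon n/2)(n-f)\approx \varepsilon n^2/4$, while the assumption ``no row has $\ge D$ removable positions for a single source'' only caps the total at about $f\cdot D(f-1)\approx D(1-\varepsilon)^2n^2/4$, which is far larger for small $\varepsilon$. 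So neither (a) nor (c) is forced, and no transfer into $i_0$ need exist. The structural fact you would need --- that \emph{some} row can receive many transfers --- is precisely what the paper's out-star/digraph argument establishes, and it cannot be replaced by designating a single $i_0$ up front and hoping the counts line up.
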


This suffices for our proof of \cref{thm:1/2-kahn}; indeed, if row $i_{0}$
is missing entries in at least $D$ columns, then by \cref{claim:cascade-increase}, either we can increase the size of $U$
or there are at least $D\left(n-f\right)\ge Cn$ elements which are
$\left(i_{0},b\right)$-removable, meaning that $\left|Q\left(i_{0}\right)\right|\ge Cn$.
If $i_{1}$ contains at least $D$ elements that are $\left(i_{0},b\right)$-addable,
then in the proof of \cref{eq:recurrence} with $\ell=1$ we have $|Q|\ge D$ so either we can increase the size of $U$ or $\left|Q\left(i_{0},i_{1}\right)\right|\ge D\left(n-f-1\right)-2n\ge Cn$.

Before proceeding to the proof of \cref{claim:many-missing},
we first observe that using \cref{lem:matching} we can modify $T$
to ensure that every row $i$ that is not a basis can
be assigned a distinct column $b_i$ with $T(i,b_i)=\emptyset$. To show this,
we prove the following lemma.
\begin{lem}
\label{lem:different-missing}
We can modify $T$ in such a way that the size of each $S_i$ remains the same, and in such a way that there is a choice of disjoint columns $\{ b_1,\dots,b_f\}$ for which any $S_i$ that is not a basis has no element in column $b_i.$
\end{lem}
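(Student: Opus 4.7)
The plan is to apply Hall's theorem to an auxiliary bipartite graph capturing both currently empty cells and those that can be made empty via simple swaps. Set $R := \{i \in [f] : S_i \text{ is not a basis}\}$, so $|R| \le f \le n$ and $E_i := \{c : T(i,c) = \emptyset\}$ is nonempty for each $i \in R$. Since basis rows impose no constraint on $b_i$, once I find distinct columns $\{b_i\}_{i \in R}$ with $T(i, b_i) = \emptyset$ (after modification), I can extend to disjoint $\{b_i\}_{i \in [f]}$ using unused columns.

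Consider the bipartite graph $H$ on $R \cup [n]$ with edges $(i,c)$ iff either $T(i,c) = \emptyset$ or $c$ is $(i,b)$-swappable for some $b \in E_i$. The key claim is that $|N_H(i)| > f$ for every $i \in R$. This yields Hall's condition $|N_H(W)| \ge |W|$ for every $W \subseteq R$ immediately: for any $i \in W$, $|N_H(W)| \ge |N_H(i)| > f \ge |W|$. Hall's theorem then produces a matching $\sigma: R \to [n]$, and I realize it by performing, for each $i \in R$ with $T(i,\sigma(i)) \ne \emptyset$, the associated simple swap in row $i$ to empty $T(i,\sigma(i))$ while preserving $|S_i|$.

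To prove the key claim, fix $i \in R$ and $b \in E_i$; since $T(i,b) = \emptyset$ we have $|C_b| \le f - 1$. Apply \cref{claim:many-good} to $(i,b)$. If $|S_i| \ge n - |C_b|$, there are at least $n - |C_b|$ columns which are $(i,b)$-swappable, and these are disjoint from $E_i$, so $|N_H(i)| \ge (n - |C_b|) + |E_i| \ge n - f + 2$, which exceeds $f$ since $f = (1-\varepsilon)n/2 < n/2$. Otherwise $|S_i| < n - |C_b|$, in which case by augmentation there exists $y \in B_{i,b}$ independent of both $S_i$ and $C_b$; for any filled column $c$ of row $i$, this $y$ witnesses the $(i,b)$-swappability of $c$ (row-independence since $y \notin \operatorname{span}(S_i) \supseteq \operatorname{span}(S_i - T(i,c))$, and column-independence by the choice of $y$), so every column of $[n]$ lies in $N_H(i)$ and $|N_H(i)| = n > f$.

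The hard part will be verifying that the sequence of simple swaps can be performed consistently without violating column independence: swaps in different rows may each introduce a new element into the same column, so the condition that $C_b + y$ be independent must be re-checked in the evolving table. However, at most $|R| \le f$ swaps introduce a new element into any given column, so every $|C_b|$ stays bounded by $2f < n$ throughout the process; the augmentation property then continues to provide a valid $y \in B_{i,b}$ satisfying both the row and column constraints at each step, possibly with a careful ordering of the rows, yielding the desired modification of $T$.
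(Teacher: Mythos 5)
The approach you take is genuinely different from the paper's, and it contains a gap that you yourself flag but do not close. The paper's proof is a simple one-pass greedy argument: process the rows $1,\dots,f$ in order, and for each non-basis row $i$, apply \cref{claim:many-good} to the \emph{current} state of $T$ to find at least $n-f \ge n/2$ columns that are $(i,c)$-swappable (where $c$ is the currently empty column in row $i$); since only $i-1 < n/2$ columns have already been assigned as $b_1,\dots,b_{i-1}$, one of these swappable columns is fresh, so pick it as $b_i$ and perform the swap. Because the swappable column is found from scratch at each step using the up-to-date table, there is never any stale witness to re-validate, and the argument closes immediately.

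Your Hall's-matching approach, by contrast, commits to a fixed target column $\sigma(i)$ for every row based on the \emph{initial} table, and only afterwards tries to realize the corresponding simple swaps. This is where the gap lies. A simple swap in row $j$ inserts a witness $y_j$ into some column $b' \in E_j$, and there is nothing in the matching $\sigma$ that prevents several rows from inserting their witnesses into the same column $b'$. After earlier swaps, $C_{b'}$ has grown, so the requirement ``$C_{b'} + y$ is independent'' must be re-established for a later row $i$. Your proposed fix --- that $|C_{b'}|$ stays below $n$ so augmentation gives a new witness --- does not work as stated: what you actually need is a $y \in B_{i,b'}$ that is \emph{simultaneously} independent of the (grown) $C_{b'}$ and of $S_i - T(i,\sigma(i))$, and since $|S_i|$ can be as large as $n-1$, the counting via augmentation does not obviously produce such a $y$. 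Worse, even if some $b \in E_i$ admits a valid witness for \emph{some} target column, \cref{claim:many-good} only guarantees that \emph{many} columns are $(i,b)$-swappable in the current table; it does not guarantee that your pre-committed $\sigma(i)$ is among them. The phrase ``possibly with a careful ordering of the rows'' signals that you noticed the difficulty, but no ordering is exhibited or proved to work, so the realization step is missing. The paper's greedy argument is not only simpler (one augmentation-based claim, no Hall's theorem) but sidesteps the entire consistency issue, which is precisely what makes it correct.
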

\begin{proof}
Suppose for some $i$ that we found distinct columns $b_1,\dots,b_{i-1}$
such that, for all $S_{j}$ which are not bases, no element
of $S_{j}$ is in column $b_{j}$. If $S_i$ is a basis we choose an arbitrary unused column as $b_i.$ Otherwise there is a column, say $c$, such that $T(i,c)$ is empty. Then by \cref{claim:many-good} there are at least $n-|C_c|\ge n-f\ge n/2$ columns which are are $(i,c)$-swappable. At least one of these columns does not appear in $\left\{ b_1,\dots,b_{i-1}\right\}$, since $i-1<f\le n/2$. Let $b$ be such a column and set $b_i=b$. By performing a simple swap we can modify $S_i$ in such a way that the cell $(i,b)$ becomes empty, while preserving the independence conditions. 
\end{proof}

Now we prove \cref{claim:many-missing}.
\begin{proof}[Proof of \cref{claim:many-missing}]
Recall that we are assuming there are at least $\varepsilon n/2$
$S_i$ that are not bases. Let $E$ be the largest
integer such that there are at least $M_{E}=\left(\varepsilon/\left(4D^{2}\right)\right)^{E}n$
$S_i$ missing entries in at least $E$ columns. We may assume $1\le E<D$.
By \cref{lem:different-missing} we may find distinct columns $b_1,\dots,b_f$ such that each $S_i$ which is not a basis has $T(i,b_i)=\emptyset$. We describe a procedure
that modifies $T$ to increase $E$.

We create an auxiliary digraph $G$ on the vertex set $[f]$ as follows.
For every $i$ such that $S_{i}$ is missing entries in at
least $E$ columns, put an arc to $i$ from every $j$ 
such that $S_{j}$ contains at least $E+1$ elements that are $\left(i,b\left(i\right)\right)$-removable.

Say an \emph{$\left(E+1\right)$-out-star} in a digraph is a set of
$E+1$ arcs directed away from a single vertex. Exactly the same proof as in \cite{us} shows that there are $M_{E+1}$ vertex-disjoint $\left(E+1\right)$-out-stars. Now, consider an $\left(E+1\right)$-out-star (with centre $S_{j}$, say). We show how to transfer $E+1$ elements from $S_j$ to its out-neighbours, the end result of which is that $S_j$ is then missing entries in $E+1$ columns. We will then be able to repeat this process for each of our out-stars.

For each of the $E+1$ out-neighbours $S_{i}$ of $S_{j}$ there are
at least $E+1$ positions of $S_j$ which are $\left(i,b_i\right)$-removable. Therefore, for each such $S_i$ we can make a specific choice of such a position, in such a way that each of these $E+1$ choices are \emph{distinct}. For each $S_i$ we can then remove the chosen element from $S_j$ by increasing the size of $S_i$. These modifications will not create any conflicts, because
any addability witness for any element in $S_{0}$ is in a column
unique to that $S_{i}$ (by \cref{lem:different-missing}). After this
operation, $S_i$ is now missing entries in 
at least $E+1$ columns.
\end{proof}

\end{document}